\theoremstyle{plain}
\newtheorem{thm}{Theorem}[section]
\newtheorem{cor}[thm]{Corollary}
\newtheorem*{conj}{Conjecture}
\theoremstyle{definition}
\theoremstyle{remark}
\newtheorem{rmk}[thm]{Remark}
\def\Z{{\mathbf Z}}
\def\Q{{\mathbf Q}}
\def\C{{\mathbf C}}
\def\A{{\mathbf A}}
\def\cO{\mathcal{O}}
\def\fa{\mathfrak{a}}
\def\.{\cdot}
\def\^{\widehat}
\def\({\left(}
\def\){\right)}
\renewcommand{\and}{ \ \ \text{ and } \ \ }
\DeclareMathOperator{\lct} {lct}
\begin{document}

\title{Bernstein-Sato polynomials for general ideals vs. principal ideals}
\author[M. Musta\c{t}\u{a}]{Mircea~Musta\c{t}\u{a}}
\address{Department of Mathematics, University of Michigan, 530 Church Street,
Ann Arbor, MI 48109, USA}
\email{{\tt mmustata@umich.edu}}

\thanks{The author was partially supported by NSF grant DMS-1701622 and a Simons Fellowship.}

\subjclass[2010]{14F10 (primary); 14E18, 14F18 (secondary)}

\begin{abstract}
We show that given an ideal $\fa$ generated by regular functions $f_1,\ldots,f_r$ on $X$,
the Bernstein-Sato polynomial of $\fa$ is equal to the reduced Bernstein-Sato polynomial of
the function $g=\sum_{i=1}^rf_iy_i$ on $X\times\A^r$. By combining this with results from \cite{BMS}, we 
relate invariants and properties of $\fa$ to those of $g$. We also use the result on
Bernstein-Sato polynomials to
show that
the Strong Monodromy Conjecture for Igusa zeta functions of principal ideals implies a similar statement for arbitrary ideals.
\end{abstract}

\maketitle

\section{Introduction}

Given a smooth complex algebraic variety $X$ and a nonzero regular function $f\in\cO_X(X)$, the \emph{Bernstein-Sato polynomial} $b_f(s)\in\C[s]$
is the monic polynomial of minimal degree such that 
$$
b_f(s)f^s\in {\mathcal D}_X[s]\bullet f^{s+1}.
$$
Here ${\mathcal D}_X$ is the sheaf of differential operators on $X$ and we use $\bullet$ to denote the action of differential operators.
Note that $f^s$ can be treated as a symbol on which differential operators
act in the expected way. By making $s=-1$, we see that if $f$ is not invertible, then $b_f(s)$ is divisible by $(s+1)$, and the quotient
$\widetilde{b}_f(s)=b_f(s)/(s+1)$ is the \emph{reduced Bernstein-Sato} polynomial of $f$. The existence of $b_f(s)$ was proved by Bernstein
for the case when $X=\A^n$ in
\cite{Bernstein} and a proof in the general case (in the analytic setting) is given in \cite{Bjork}. The Bernstein-Sato polynomial of $f$ is a subtle invariant
of the singularities of the hypersurface defined by $f$ and it is connected to several other invariants of singularities (for example, by
\cite{Malgrange}, its roots determine the
eigenvalues of the monodromy action on the cohomology of the Milnor fiber). 

The above invariant has been extended to arbitrary (nonzero) coherent ideals $\fa$ in $\cO_X$ in \cite{BMS}. Working locally, we may and will assume that
we have nonzero regular functions $f_1,\ldots,f_r\in\cO_X(X)$ that generate the ideal $\fa$. In this case, the Bernstein-Sato polynomial $b_{\fa}(s)\in\C[s]$
is the monic polynomial of minimal degree such that
$$
b_{\fa}(s)f_1^{s_1}\cdots f_r^{s_r}\in \sum_{|u|=1}{\mathcal D}_X[s_1,\ldots,s_r]\bullet \prod_{u_i<0}{{s}\choose {-u_i}}f_1^{s_1+u_1}\cdots f_r^{s_r+u_r},
$$
where the sum is over all $u=(u_1,\ldots,u_r)\in\Z^r$ such that $|u|:=\sum_iu_i=1$. Here $s=s_1+\ldots+s_r$, where $s_1,\ldots,s_r$ are independent variables,
$f_1^{s_1}\cdots f_r^{s_r}$ is a symbol on which differential operators act in the expected way, and for every positive integer $m$, we put
${{s_i}\choose {m}}=\frac{1}{m!}\prod_{j=0}^{m-1}(s_i-j)$. The existence, independence of the choice of the generators $f_1,\ldots,f_r$, and some basic properties
of $b_{\fa}(s)$ were proved in \cite{BMS}. The main observation
 of this note is the following result. Given $f_1,\ldots,f_r$ as above, we consider the regular function
$g=\sum_{i=1}^rf_iy_i$ on $X\times\A^r$, where $y_1,\ldots,y_r$ are the coordinates on $\A^r$.

\begin{thm}\label{thm_description}
If $f_1,\ldots,f_r$ are nonzero regular functions on the smooth, complex algebraic variety $X$, generating the coherent ideal $\fa$, and if 
$g=\sum_{i=1}^rf_iy_i$, then $b_{\fa}(s)=\widetilde{b}_g(s)$.
\end{thm}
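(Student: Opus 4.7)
The plan is to establish both divisibilities $b_\fa(s) \mid \widetilde{b}_g(s)$ and $\widetilde{b}_g(s) \mid b_\fa(s)$, which together yield equality of monic polynomials. The key combinatorial ingredient is the multinomial expansion $g^N = \sum_{|\mathbf{m}|=N}\binom{N}{\mathbf{m}}\prod_i (f_iy_i)^{m_i}$, valid at nonneg integer $N$ and extending to a polynomial identity in $s_1,\ldots,s_r$. This lets one identify $\prod f_i^{s_i}$ with (up to the factor $\binom{s}{\mathbf{s}}$) the coefficient of $\prod y_i^{s_i}$ in $g^s$, where $s=\sum_i s_i$; on this formal correspondence, both the BMS-style equation for $\fa$ and the ordinary $b$-function equation for $g$ can be matched term by term.

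For the first divisibility, start with a minimal functional equation $(s+1)\widetilde{b}_g(s)\,g^s = P\,g^{s+1}$ for some $P \in \cD_{X\times\A^r}[s]$, written in normal form $P = \sum_{\alpha,\beta} P_{\alpha,\beta}(s,x,\partial_x)\,y^\beta\partial_y^\alpha$ with $P_{\alpha,\beta} \in \cD_X[s]$. Using $\partial_y^\alpha g^{s+1} = \frac{(s+1)!}{(s+1-|\alpha|)!}\,f^\alpha g^{s+1-|\alpha|}$ and extracting the coefficient of $\prod y_i^{s_i}$ on both sides, only terms with $|\beta|=|\alpha|-1$ contribute. After simplifying the combinatorial factors, dividing out $(s+1)\binom{s}{\mathbf{s}}$, and setting $u:=\alpha-\beta$ (so $|u|=1$), the resulting equation takes the form $\widetilde{b}_g(s)\prod f_i^{s_i} = \sum_{|u|=1} R_u \prod f_i^{s_i+u_i}$ with $R_u \in \cD_X[s_1,\ldots,s_r]$. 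In each term of $R_u$, every index $i$ with $u_i<0$ satisfies $\beta_i \ge -u_i$, and the polynomial identity $\binom{s_i}{\beta_i} = \binom{s_i}{-u_i}\cdot\frac{(-u_i)!}{\beta_i!}\prod_{j=-u_i}^{\beta_i-1}(s_i-j)$ lets one factor $\prod_{u_i<0}\binom{s_i}{-u_i}$ out of $R_u$, producing exactly the BMS functional equation with $\widetilde{b}_g(s)$ in place of $b_\fa(s)$. Hence $b_\fa(s) \mid \widetilde{b}_g(s)$.

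The second divisibility runs the same calculation in reverse. Start from the BMS functional equation for $\fa$, multiply both sides by $\prod y_i^{s_i}$, use the dual identity $\binom{s_i}{-u_i}\,y_i^{s_i} = \frac{y_i^{-u_i}}{(-u_i)!}\,\partial_{y_i}^{-u_i}\,y_i^{s_i}$ to trade each binomial coefficient for a $\partial_y$-operator, and then sum over nonneg integer $\mathbf{s}$ with $\sum s_i=s$, weighted by $\binom{s}{\mathbf{s}}$. The LHS assembles into $b_\fa(s)\,g^s$ by the multinomial identity, while the RHS, after reassembling the $\partial_y$'s, lies in $\frac{1}{s+1}\,\cD_{X\times\A^r}[s]\,g^{s+1}$. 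This gives $(s+1)\,b_\fa(s)\,g^s \in \cD_{X\times\A^r}[s]\,g^{s+1}$, so $b_g(s) \mid (s+1)\,b_\fa(s)$ and therefore $\widetilde{b}_g(s) \mid b_\fa(s)$. The main technical obstacle in both directions is to justify rigorously the ``coefficient extraction'' and the ``weighted summation over $\mathbf{s}$'', since $g^s$ has no literal $y$-expansion for formal $s$. I would handle this by interpreting the identities through specialization to nonneg integer values of the $s_i$'s (where $g^s$ is a genuine polynomial of $y$-degree $s$), extracting coefficients there, and extending back to formal $s_1,\ldots,s_r$ by the polynomial identity principle.
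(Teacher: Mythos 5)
Your proposal is essentially the paper's argument. Both proofs rest on the same two ideas: (1) reduce the formal identity $b_g(s)g^s = P\bullet g^{s+1}$ to identities at nonnegative integers $m$, expand $g^m$ multinomially, and compare the coefficients of the $y$-monomials to get a family of identities indexed by $a\in\Z_{\geq 0}^r$ with $|a|=m$; (2) after dividing out the factor $(s+1)$ coming from the derivative $\partial_y^\beta g^{s+1}$, recognize the result, upon converting $s_i$-dependence into the binomial coefficients $\binom{s_i}{\cdot}$, as exactly the BMS functional equation. The one organizational difference is that you split the argument into two divisibilities, whereas the paper presents it as a single chain of equivalences: $\widetilde b_g(s)$ and $b_\fa(s)$ are each characterized as the minimal monic polynomial satisfying a certain condition, and the paper shows directly that those two conditions are equivalent. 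That formulation has you do each computation once rather than twice, and it automatically gives the existence of $b_\fa$ as a byproduct (Remark 2.2), but the underlying calculation is identical to yours. Your handling of the formal-$s$ versus integer-$m$ issue (specialize to nonnegative integers, extract coefficients, conclude by the polynomial identity principle) is precisely how the paper justifies the reduction. One point worth making explicit, which you leave implicit in \textquotedblleft reassembling the $\partial_y$'s\textquotedblright\ in the reverse direction, is that the polynomials in the individual variables $s_i$ appearing in the BMS coefficients must be converted into Euler operators $y_i\partial_{y_i}$; this is where the observation that the $\binom{s_i}{\alpha_i}$ with $\alpha_i + \gamma_i \geq 0$ form a basis of $\C[s_i]$ (resp.\ of $\binom{s_i}{-\gamma_i}\C[s_i]$) is used, and the paper states this step explicitly at the end of its proof.
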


In fact, this observation can be used to give a new proof of the existence of $b_{\fa}(s)$ and of its independence of the generators $f_1,\ldots,f_r$.
We hope that it will be useful for extending properties of Bernstein-Sato polynomials from the case of principal ideals to arbitrary ones. 

By combining the above description of $b_{\fa}(s)$ with results in \cite{BMS}, we can relate invariants and properties of $g$ with those
of the ideal $\fa$. Recall that by a result of Kashiwara \cite{Kashiwara}, for every nonzero $f\in\cO_X(X)$,
all roots of the Bernstein-Sato polynomial $b_f(s)$ are negative rational numbers. If $f$ is not invertible, then the negative of the largest root 
of $\widetilde{b}_f(s)$ is the \emph{minimal exponent} $\widetilde{\alpha}_f$ of $f$ (with the convention that $\widetilde{\alpha}_f=\infty$ if 
$\widetilde{b}_f(s)=1$, which is the case if and only if the hypersurface defined by $f$ is smooth). Therefore $\min\{1,\widetilde{\alpha}_f\}$
is the negative of the largest root of $b_f(s)$; by a result of Lichtin and Koll\'{a}r (see \cite[Theorem~10.6]{Kollar}), this is equal to the log canonical threshold
${\rm lct}(f)$ of $f$.

\begin{cor}\label{cor1}
With the notation in the theorem, we have $\widetilde{\alpha}_g={\rm lct}(\fa)$.
\end{cor}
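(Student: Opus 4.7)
The plan is to reduce the corollary to Theorem~\ref{thm_description} combined with the generalization to arbitrary ideals, proved in \cite{BMS}, of the Lichtin--Koll\'ar description of $\lct(f)$ as the negative of the largest root of $b_f(s)$. Specifically, the key input from \cite{BMS} that I intend to use is the identity
$$
\lct(\fa)=-\max\{\alpha\in\R : b_{\fa}(\alpha)=0\},
$$
which extends the principal case recalled in the introduction, but without the ``$\min\{1,\cdot\}$'' truncation, since $b_{\fa}(s)$ need not be divisible by $s+1$ in general.

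Granting this input, the argument is immediate. By Theorem~\ref{thm_description}, $b_{\fa}(s)=\widetilde{b}_g(s)$; in particular the two polynomials share the same largest real root. The negative of this root is, on the one hand, $\lct(\fa)$ by the \cite{BMS} identity above, and, on the other hand, $\widetilde{\alpha}_g$ by the very definition of the minimal exponent recalled before the statement of the corollary. Hence $\widetilde{\alpha}_g=\lct(\fa)$.

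The only point requiring care is to cite the exact form of the result from \cite{BMS}: one must invoke the full equality (both inequalities) between $\lct(\fa)$ and the negative of the largest root of $b_{\fa}(s)$, rather than only one of the two inclusions. One should also confirm the degenerate case for consistency: if $\fa=\cO_X$ locally, then $g$ cuts out a smooth hypersurface, so $\widetilde{b}_g(s)=1$ and, by convention, both $\widetilde{\alpha}_g$ and $\lct(\fa)$ are interpreted as $+\infty$. Beyond these verifications no substantive obstacle is expected, since all the real work is contained in Theorem~\ref{thm_description} and in the results of \cite{BMS}.
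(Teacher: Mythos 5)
Your proof is correct and follows the same route as the paper: invoke \cite[Theorem 2]{BMS} to identify $\lct(\fa)$ with the negative of the largest root of $b_{\fa}(s)$, then apply Theorem~\ref{thm_description} and the definition of $\widetilde{\alpha}_g$. The additional remarks about the degenerate case and the exact form of the \cite{BMS} result are sound but not needed beyond what the paper already does.
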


\begin{cor}\label{cor2}
With the notation in the theorem, if $\fa$ defines a reduced, complete intersection subscheme $W$, of pure codimension $r$, then
$W$ has rational singularities if and only if $\widetilde{\alpha}_g=r$ and $-r$ is a root of multiplicity $1$ of $\widetilde{b}_g(s)$.
\end{cor}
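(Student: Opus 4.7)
The plan is to use Theorem \ref{thm_description} to reduce the corollary to a known characterization of rational singularities for reduced complete intersections in terms of the Bernstein-Sato polynomial of the defining ideal. The precise statement needed, which is established in \cite{BMS}, is that if $\fa$ defines a reduced complete intersection subscheme $W$ of pure codimension $r$, then $W$ has rational singularities if and only if $-r$ is the largest root of $b_{\fa}(s)$ and it occurs with multiplicity~$1$. I would begin by stating (and giving a precise citation to) this BMS result, so that the rest of the argument becomes a formal translation.

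With that in hand, the argument runs as follows. By Theorem \ref{thm_description} we have the identity $b_{\fa}(s) = \widetilde{b}_g(s)$. By the definition of the minimal exponent recalled in the introduction, $\widetilde{\alpha}_g = r$ is precisely the statement that the largest root of $\widetilde{b}_g(s)$ equals $-r$. The value $\widetilde{\alpha}_g = \infty$ is excluded automatically here: since $\fa$ defines a subscheme of positive codimension, $\fa \neq \cO_X$, so $b_{\fa}(s)$ is nonconstant, and hence so is $\widetilde{b}_g(s)$. The multiplicity condition on $-r$ transfers verbatim between $\widetilde{b}_g(s)$ and $b_{\fa}(s)$. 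Hence the pair of hypotheses in the corollary translate, via Theorem \ref{thm_description}, to the requirement that $-r$ is the largest root of $b_{\fa}(s)$ and has multiplicity~$1$; by the cited BMS characterization, this is equivalent to $W$ having rational singularities.

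The main obstacle, already dispatched by Theorem \ref{thm_description}, is the bridge between the ideal-theoretic object $b_{\fa}(s)$ and the reduced Bernstein-Sato polynomial of the single function $g$. Beyond that, the proof requires no new geometric or cohomological input: the nontrivial content linking multiplier ideals and rational singularities to roots of Bernstein-Sato polynomials is already packaged in \cite{BMS}. The only point one should verify carefully is that the hypotheses match those of the BMS statement being invoked (reduced, pure codimension~$r$, complete intersection); this is exactly the hypothesis of the corollary.
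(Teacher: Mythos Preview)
Your proposal is correct and follows essentially the same approach as the paper: invoke the characterization of rational singularities from \cite{BMS} (Theorem~4 there, phrased as $\lct(\fa)=r$ together with $-r$ being a simple root of $b_{\fa}(s)$, which is equivalent to your formulation via \cite[Theorem~2]{BMS}), then translate to $\widetilde{b}_g(s)$ using Theorem~\ref{thm_description}. The only cosmetic difference is that the paper states the BMS criterion with $\lct(\fa)=r$ rather than ``$-r$ is the largest root,'' but these are the same condition.
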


Finally, we apply the description of $b_{\fa}(s)$ in the theorem to show that the Strong Monodromy Conjecture for Igusa zeta functions 
associated to hypersurfaces
implies the similar statement for arbitrary ideals. For the sake of simplicity, we work in the $p$-adic setting, though a similar result holds for the
motivic zeta function (see Remark~\ref{rmk_motivic} below).

Recall that if $f\in\Z_p[x_1,\ldots,x_n]$ is a nonzero polynomial over the ring of $p$-adic integers, the Igusa zeta function associated to $f$ is the
formal power series in $p^{-s}$ given by
$$Z_p(f;s):=\int_{\Z_p^n}|f(x)|_p^sd\mu_p(x),$$
where $|\cdot|_p$ is the $p$-adic absolute value on $\Q_p$ and $\mu_p$ is the Haar measure on $\Q_p^n$. This power series encodes
the numbers $a_m$ of roots of $f$ in $(\Z/p^m\Z)^n$ for $m\geq 1$. It was shown by Igusa \cite{Igusa1}, \cite{Igusa2} that $Z_p(f; s)$ is a rational function of $p^{-s}$,
with the candidate poles determined in terms of a log resolution of the pair $(\A_{\C}^n,f)$.
The following is the outstanding open problem in this area:

\begin{conj}[Strong Monodromy Conjecture, Igusa]
Given $f\in\Z[x_1,\ldots,x_n]$, for every prime $p$ large enough, if
 $s_0$ is a pole of $Z_p(f;s)$, then ${\rm Re}(s_0)$ is a root of $b_f(s)$. Moreover, if the order of $s_0$ as a pole is $m$,
 then ${\rm Re}(s_0)$ is a root of $b_f(s)$ of multiplicity $\geq m$.
\end{conj}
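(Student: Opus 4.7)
The plan is to attack the conjecture through a log resolution $\pi\colon Y\to\A_\C^n$ of the pair $(\A^n,f)$, write $\pi^\ast\divisor(f)=\sum_{i=1}^N N_iE_i$ and $K_{Y/\A^n}=\sum_{i=1}^N(\nu_i-1)E_i$, and exploit Denef's formula, which asserts that for every prime $p$ of good reduction for the resolution,
$$
Z_p(f;s)\;=\;p^{-n}\sum_{I\subseteq\{1,\ldots,N\}}|E_I^\circ(\F_p)|\,(p-1)^{|I|}\prod_{i\in I}\frac{1}{p^{\nu_i+N_is}-1}.
$$
This identifies the candidate poles as $s_0=-\nu_i/N_i+2\pi i k/(N_i\log p)$ with $k\in\Z$, and reduces the task to showing that for each candidate pole that persists as an actual pole of $Z_p(f;s)$, the real part $-\nu_i/N_i$ is a root of $b_f(s)$ of multiplicity at least the order of $s_0$.

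To link the surviving poles to roots of $b_f(s)$ I would route through monodromy. A'Campo's formula computes, for each $x\in f^{-1}(0)$, the zeta function of the monodromy acting on the nearby cycles at $x$ as $\prod_i(1-t^{N_i})^{-\chi(E_i^\circ\cap\pi^{-1}(x))}$. Comparing this with Denef's formula, one argues that if $s_0$ is an actual pole, then $\exp(-2\pi i\,\mathrm{Re}(s_0))$ is an eigenvalue of the monodromy on the Milnor fibre cohomology at some point of $f^{-1}(0)$; this weaker assertion is the classical Monodromy Conjecture. Malgrange's theorem then produces a root $\alpha$ of $b_f(s)$ with $\alpha\equiv\mathrm{Re}(s_0)\pmod{\Z}$, and Kashiwara's confinement of the roots to $(-n,0)$ is used to promote the congruence to an equality. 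The multiplicity refinement would be handled by tracking the number of divisors $E_i$ meeting simultaneously at a point with common ratio $\nu_i/N_i$ and matching it, via the $V$-filtration on nearby cycles, to the Jordan block structure underlying the multiplicity of $\alpha$ in $b_f(s)$.

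The main obstacle — and the reason SMC has resisted proof since Igusa posed it — is the pervasive cancellation in Denef's formula: a typical candidate pole $-\nu_i/N_i$ does not persist as an actual pole, because the Euler characteristics $\chi(E_I^\circ)$ combine to cancel its contribution, and no geometric invariant of the log resolution is known to detect which candidates survive. Complete proofs exist only where extra structure tames the cancellation: surfaces (Loeser), quasi-homogeneous isolated singularities, hyperplane arrangements, monomial ideals, and a handful of further families. The multiplicity statement compounds the difficulty, demanding a quantitative match between the purely combinatorial order of the pole and the Hodge-theoretic multiplicity of a root. In view of this, the realistic use of the conjecture in the present paper is not a frontal attack but the reduction foreshadowed by Theorem \ref{thm_description}: transferring SMC for the principal ideal $(g)$ with $g=\sum_i f_iy_i$ on $X\times\A^r$ to SMC for the ideal $\fa$ on $X$, thereby producing a conditional rather than unconditional result.
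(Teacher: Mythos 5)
You were asked to ``prove'' something the paper itself labels an open problem: the Strong Monodromy Conjecture is stated in a \verb|conj| environment, the paper explicitly calls it ``the outstanding open problem in this area,'' and no proof of it appears anywhere in the text. You correctly recognize this and, rather than fabricating a proof, you correctly diagnose the situation: the paper's genuine contribution on this front is Theorem~\ref{thm_Igusa}, which is a \emph{reduction} (via the explicit identity $Z_p(g;s)=\frac{1-p^{-1}}{1-p^{-s-1}}Z_p(\fa;s)$ together with Theorem~\ref{thm_description}) showing that SMC for the single polynomial $g=\sum_i f_iy_i$ implies the SMC-analogue for the ideal $\fa$. That is exactly what the paper does, and your last paragraph identifies it accurately.

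One technical point worth flagging in your sketch of a would-be attack: the passage from the weak Monodromy Conjecture to the strong one via Malgrange and Kashiwara does not go through. Malgrange's theorem gives that $\exp(-2\pi i\,{\rm Re}(s_0))$ is an eigenvalue of monodromy if and only if some root $\alpha$ of $b_f(s)$ satisfies $\alpha\equiv{\rm Re}(s_0)\pmod\Z$, but the known bounds on the roots (negativity and rationality by Kashiwara; the interval $(-n,0)$ by Saito) do \emph{not} isolate a unique representative in each congruence class, since $b_f$ may well have several roots differing by integers. Bridging that gap is precisely the content of the strong form of the conjecture, so one cannot invoke Kashiwara's bound to ``promote the congruence to an equality.'' This does not affect your overall conclusion, since you already concede the sketch does not constitute a proof; but it is the specific place where the heuristic breaks down, and stating it as if it were a routine promotion would mislead a reader into thinking SMC is nearly a corollary of the weak form.
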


One can study an analogue of Igusa's zeta function for arbitrary ideals $\fa\subseteq\Z_p[x_1,\ldots,x_n]$ (see \cite{Veys}). 
More precisely, if $f_1,\ldots,f_r$ generate $\fa$, then we have a function 
$\varphi_{\fa}\colon \Z_p^n\to\Q$ given by $\varphi_{\fa}(x)=\max_{i=1}^n|f_i(x)|_p$ and the corresponding Igusa zeta function
$$Z_p(\fa;s):=\int_{\Z_p^n}\varphi_{\fa}(x)^sd\mu_p(x).$$
Again, this is a rational function of $p^{-s}$ and candidate poles can be given in terms of a log resolution of $(\A_{\C}^n,\fa)$.

\begin{thm}\label{thm_Igusa}
If $\fa$ is the ideal of $\Z_p[x_1,\ldots,x_n]$ generated by the nonzero polynomials $f_1,\ldots,f_r$
and if $g=\sum_{i=1}^rf_iy_i\in\Z_p[x_1,\ldots,x_n,y_1,\ldots,y_r]$, then 
$$Z_p(g;s)=\frac{1-p^{-1}}{1-p^{-s-1}}Z_p(\fa;s).$$
In particular, if $f_1,\ldots,f_r\in\Z[x_1,\ldots,x_n]$ and
$g$ satisfies the Strong Monodromy Conjecture, then for every prime $p$ large enough,
if $s_0$ is a pole of $Z_p(\fa,s)$ of order $m$, then ${\rm Re}(s_0)$ is a root of $b_{\fa}(s)$ of multiplicity $\geq m$.
\end{thm}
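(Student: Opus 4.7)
The first assertion is a direct integral computation, and I would establish it by Fubini, evaluating the $y$-integral for each fixed $x\in\Z_p^n$. Outside the Haar measure-zero common vanishing locus of the $f_i$, set $c:=\varphi_\fa(x)>0$ and write $f_i(x)=c h_i$ with $(h_1,\ldots,h_r)\in\Z_p^r$ having at least one coordinate $h_{i_0}\in\Z_p^\times$. The substitution $u=\sum_i h_i y_i$, with the other $y_j$ unchanged, has unit Jacobian $h_{i_0}$ and hence preserves Haar measure on $\Z_p^r$, so the inner integral becomes
$$c^s\int_{\Z_p}|u|_p^s\,d\mu_p(u)=c^s\cdot\frac{1-p^{-1}}{1-p^{-s-1}},$$
where the last evaluation follows from the shell decomposition $\Z_p=\{0\}\sqcup\bigsqcup_{k\geq 0}p^k\Z_p^\times$. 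Integrating in $x$ yields the claimed factorization.

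For the deduction of SMC for $\fa$ from SMC for $g$, I would invoke Theorem~\ref{thm_description} in the form $b_g(s)=(s+1)b_\fa(s)$. Set $C(s):=\frac{1-p^{-1}}{1-p^{-s-1}}$; its only poles are simple and lie on the lattice $\Lambda:=-1+\frac{2\pi i}{\log p}\Z$, and $C$ is holomorphic and nowhere zero elsewhere. Consequently a pole $s_0$ of $Z_p(\fa;s)$ of order $m$ is a pole of $Z_p(g;s)$ of order $m$ when $s_0\notin\Lambda$ and of order $m+1$ when $s_0\in\Lambda$. Applying SMC to $g$ and matching the extra $(s+1)$ factor in $b_g$ against the extra pole contributed by $C$ shows that $\mathrm{Re}(s_0)$ is a root of $b_\fa$ of multiplicity $\geq m$ in every case \emph{except} possibly when $\mathrm{Re}(s_0)=-1$ and $s_0\notin\Lambda$, where the naive bookkeeping yields only $\geq m-1$.

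The main obstacle is this last exceptional case, which amounts to showing that whenever a pole of the form $-1+it_0\notin\Lambda$ occurs with order $m$, the real point $-1$ is still a root of $b_\fa$ of multiplicity $\geq m$. I would proceed via a log resolution of $\fa$ over $\Q$: for $p$ sufficiently large, candidate poles of $Z_p(\fa;s)$ come from exceptional divisors with numerical data $(N_i,m_i)$ and lie on arithmetic progressions $-m_i/N_i+\frac{2\pi i}{N_i\log p}\Z$, each divisor contributing uniformly along its progression. The condition $\mathrm{Re}(s_0)=-1$ forces $m_i=N_i$ for some divisor participating in the pole at $s_0$, and that divisor then contributes equally to the real point $-1\in\Lambda$ with order at least $m$; the previous paragraph applied at $s_0'=-1$ completes the argument. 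The delicate step, and the place where I expect the bulk of the technical work to lie, is verifying this vertical alignment of pole orders and controlling possible cancellations between different progressions.
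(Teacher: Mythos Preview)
Your derivation of the formula $Z_p(g;s)=\frac{1-p^{-1}}{1-p^{-s-1}}Z_p(\fa;s)$ is correct and essentially the same as the paper's. The paper works via the level-set decomposition, showing that
\[
\mu_p\big(\ord_g^{-1}(m)\big)=\sum_{d=0}^m\mu_p\big(\ord_\fa^{-1}(d)\big)\cdot\frac{p-1}{p^{m-d+1}}
\]
by exactly the unit-coordinate change of variables you use, and then sums the resulting convolution. Your Fubini formulation simply packages the same computation as a one-line inner integral; it is arguably slicker but not genuinely different.

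For the second assertion your analysis is finer than the paper's. The paper records only that $n_p(g;\lambda)=n_p(\fa;\lambda)$ for $\lambda\neq -1$ and $n_p(g;-1)=n_p(\fa;-1)+1$ when $n_p(\fa;-1)\geq 1$, and then invokes Theorem~\ref{thm_description}. You correctly observe that the simple poles of $C(s)=\frac{1-p^{-1}}{1-p^{-s-1}}$ lie along the entire lattice $\Lambda=-1+\tfrac{2\pi i}{\log p}\Z$, and you isolate the residual case $\Re(s_0)=-1$ with $s_0\notin\Lambda$, where the bookkeeping through $b_g(s)=(s+1)b_\fa(s)$ yields only multiplicity $\geq m-1$ for $b_\fa$. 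The paper's short deduction does not explicitly address this case either. Your proposed log-resolution patch---arguing that a divisor with $N_i=m_i$ contributing at $s_0$ must also contribute at the real point $-1\in\Lambda$ with the same order---is a natural line of attack, but as you yourself note, controlling cancellations among the candidate-pole progressions is the real content here and is not settled by what you have written. So on this edge case your proposal goes beyond the paper in identifying the difficulty, while neither argument is fully complete as stated.
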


In the next section we give the proof of Theorem~\ref{thm_description} and of its corollaries. The last section contains the proof of Theorem~\ref{thm_Igusa}.

\subsection*{Acknowledgement} I am indebted to Nero Budur for bringing to my attention the reference \cite{Javier} and to Wim Veys for pointing out
an inaccuracy in a previous version of this note.

\section{The description of the Bernstein-Sato polynomial of an ideal}

We begin with the formula relating the Bernstein-Sato polynomials of $\fa$ and $g$.

\begin{proof}[Proof of Theorem~\ref{thm_description}]
By taking an affine open cover of $X$, we see that we may assume that $X={\rm Spec}(R)$ is affine.
By definition, the Bernstein-Sato polynomial $b_g(s)$ is the monic polynomial of minimal degree such that there is $P\in \Gamma(X\times\A^r,
{\mathcal D}_{X\times\A^r})[s]$ such that 
\begin{equation}\label{eq_proof1}
b_g(s)g^s=P\bullet g^{s+1}.
\end{equation}
Such $P$ can be uniquely written as $P=\sum_{\alpha,\beta\in\Z_{\geq 0}^r}P_{\alpha,\beta}\frac{1}{\beta !}y^{\alpha}\partial_y^{\beta}$,
with $P_{\alpha,\beta}\in\Gamma(X,{\mathcal D}_X)[s]$, only finitely many being nonzero. 
Here we use the multi-index notation $y^{\alpha}=y_1^{\alpha_1}\cdots y_r^{\alpha_r}$
and $\partial_y^{\beta}=\partial_{y_1}^{\beta_1}\cdots\partial_{y_r}^{\beta_r}$ and $\beta!=\prod_{i=1}^r(\beta_i)!$
for $\alpha=(\alpha_1,\ldots,\alpha_r)$ and $\beta=(\beta_1,\ldots,\beta_r)$
in $\Z_{\geq 0}^r$. Furthermore, the equality in (\ref{eq_proof1}) is equivalent to
\begin{equation}\label{eq_proof2}
b_g(m)g^m=\sum_{\alpha,\beta}P_{\alpha,\beta}(m)\bullet g^{m+1}\quad\text{for all}\quad m\geq 0.
\end{equation}
Since $g=\sum_{i=1}^rf_iy_i$, we have
\begin{equation}\label{eq_proof3}
b_g(m)g^m=b_g(m)\cdot \sum_{|a|=m}{{m}\choose{a_1,\ldots,a_r}}f_1^{a_1}\cdots f_r^{a_r}y_1^{a_1}\cdots y_r^{a_r},
\end{equation}
where the sum is over all $a=(a_1,\ldots,a_r)\in\Z_{\geq 0}^r$ with $|a|:=a_1+\ldots+a_r=m$. 
On the other hand, the right-hand side of (\ref{eq_proof2}) is equal to 
$$
\sum_{\alpha,\beta}P_{\alpha,\beta}(m)\frac{1}{\beta !} y^{\alpha}\partial_y^{\beta}\bullet
\sum_{|b|=m+1}{{m+1}\choose {b_1,\ldots,b_r}}f_1^{b_1}\cdots f_r^{b_r}y_1^{b_1}\cdots
y_r^{b_r},
$$
where the second sum is over all $b=(b_1,\ldots,b_r)\in\Z_{\geq 0}^r$, with $|b|=m+1$.
This is further equal to
\begin{equation}\label{eq_proof4}
\sum_{\alpha,\beta}\sum_{|b|=m+1}\big(P_{\alpha,\beta}(m)\bullet f_1^{b_1}\cdots f_r^{b_r}\big)\cdot {{m+1}\choose {b_1,\ldots,b_r}}
\cdot \prod_{i=1}^r{{b_i}\choose{\beta_i}}\cdot \prod_{i=1}^ry_i^{b_i-\beta_i+\alpha_i},
\end{equation}
where we make the convention that ${{b_i}\choose{\beta_i}}=0$ if $\beta_i>b_i$. 
Via the formulas in (\ref{eq_proof3}) and (\ref{eq_proof4}), the equality in (\ref{eq_proof2}) is equivalent to the fact that for every
$a=(a_1,\ldots,a_r)\in\Z_{\geq 0}^r$, we have 
$$
b_g\big(|a|\big){{|a|}\choose {a_1,\ldots,a_r}}f_1^{a_1}\cdots f_r^{a_r}=
$$
$$
\sum_{|\beta|-|\alpha|=1}\big(P_{\alpha,\beta}\big(|a|\big)\bullet f_1^{a_1+\beta_1-\alpha_1}\cdots
f_r^{a_r+\beta_r-\alpha_r}\big)\cdot{{|a|+1}\choose {a_1+\beta_1-\alpha_1,\ldots,a_r+\beta_r-\alpha_r}}\cdot\prod_{i=1}^r{{a_i+\beta_i-\alpha_i}
\choose{\beta_i}}.
$$
An easy computation shows that this is further equivalent to
$$b_g\big(|a|\big)f_1^{a_1}\cdots f_r^{a_r}=$$
$$\big(|a|+1\big)\cdot \sum_{|\beta|-|\alpha|=1}\prod_{i=1}^r\frac{(a_i)!}{(\beta_i)!(a_i-\alpha_i)!}
P_{\alpha,\beta}\big(|a|\big)\bullet f_1^{a_1+\beta_1-\alpha_1}\cdots
f_r^{a_r+\beta_r-\alpha_r},$$
where the sum is over all $\alpha,\beta\in\Z_{\geq 0}^r$ with $\beta|-|\alpha|=1$ and such that $\alpha_i\leq a_i$ for all $i$. 
Since it is clear that $g$ is not invertible, we know that $(s+1)$ divides $b_g(s)$, with $\widetilde{b}_g(s)=b_g(s)/(s+1)$. 
It follows that $\widetilde{b}_g(s)$ is the monic polynomial of smallest degree such that we have $P_{\alpha,\beta}$ as above such that
for all $a=(a_1,\ldots,a_r)\in\Z_{\geq 0}^r$, we have
$$\widetilde{b}_g\big(|a|\big)f_1^{a_1}\cdots f_r^{a_r}=\sum_{|\beta|-|\alpha|=1}\prod_{i=1}^r\frac{(a_i)!}{(\beta_i)!(a_i-\alpha_i)!}
P_{\alpha,\beta}\big(|a|\big)\bullet f_1^{a_1+\beta_1-\alpha_1}\cdots
f_r^{a_r+\beta_r-\alpha_r}.$$
Equivalently, there are $P_{\alpha,\beta}\in \Gamma(X,{\mathcal D}_X)[s]$, for $\alpha,\beta\in\Z_{\geq 0}^r$ satisfying $|\beta|-|\alpha|=1$, with only finitely many nonzero, such that we have the equality
\begin{equation}\label{eq_proof6}
\widetilde{b}_g(s_1+\ldots+s_r)f_1^{s_1}\cdots f_r^{s_r}=\sum_{|\beta|-|\alpha|=1}\frac{\alpha !}{\beta !}\cdot \prod_{i=1}^r {{s_i}\choose {\alpha_i}}\cdot 
P_{\alpha,\beta}(s_1+\ldots+s_r)\bullet f_1^{s_1+\beta_1-\alpha_1}\cdots f_r^{s_r+\beta_r-\alpha_r}.
\end{equation}
Equivalently, $\widetilde{b}_g(s)$ is the monic polynomial of minimal degree such that
$\widetilde{b}_g(s_1+\ldots+s_r)f_1^{s_1}\cdots f_r^{s_r}$ lies in
$$\sum_{|\beta|-|\alpha|=1}\prod_{i=1}^r{{s_i}\choose {\alpha_i}}{\mathcal D}_X[s_1+\ldots+s_r]\bullet f_1^{s_1+\beta_1-\alpha_1}\cdots f_r^{s_r+\beta_r-\alpha_r}.$$
This sum can be rewritten as 
$$\sum_{|\gamma|=1}\sum_{\alpha}D_X[s_1+\ldots+s_r]\bullet\prod_{i=1}^r{{s_i}\choose{\alpha_i}}\cdot f_1^{s_1+\gamma_1}\cdots f_r^{s_r+\gamma_r},$$
where the first summation index runs over those $\gamma\in\Z_{\geq 0}^r$ such that $|\gamma|=1$ and the second summation index runs over
those $\alpha\in\Z_{\geq 0}^r$ such that $\alpha_i+\gamma_i\geq 0$ for all $i$. The polynomials ${{s_i}\choose{\alpha_i}}$ such that $\alpha_i+\gamma_i\geq 0$
give a basis of $\C[s_i]$ if $\gamma_i\geq 0$ and give a basis of ${{s_i}\choose {-\gamma_i}}\cdot\C[s_i]$ if $\gamma_i<0$. We thus conclude that
$\widetilde{b}_g(s)$ is the monic polynomial of smallest degree such that
$$\widetilde{b}_g(s_1+\ldots+s_r)f_1^{s_1}\cdots f_r^{s_r}\in \sum_{|\gamma|=1}{\mathcal D}_X[s_1,\ldots,s_r]\bullet 
\prod_{\gamma_i<0}{{s_i}\choose {-\gamma_i}}f_1^{s_1+\gamma_1}\cdots f_r^{s_r+\gamma_r},$$
hence it is equal to the Bernstein-Sato polynomial\footnote{This is not the definition of the Bernstein-Sato polynomial 
$b_{\fa}(s)$ in \cite{BMS}, but the definition is equivalent to this one, as explained in \cite[Section 2.10]{BMS}.} $b_{\fa}(s)$.
This completes the proof of the theorem.
\end{proof}

\begin{rmk}\label{rmk_existence}
Note that in the proof of Theorem~\ref{thm_description} we did not assume the existence of $b_{\fa}$, hence by the theorem, we can deduce
the existence of the Bernstein-Sato polynomial associated to $f_1,\ldots,f_r$ from the existence of $b_g(s)$. Furthermore, we see that 
$b_{\fa}(s)$ only depends on the ideal generated by $f_1,\ldots,f_r$ and not on these generators. Indeed, it is enough to show that if we consider 
$f_{r+1}=\sum_{i=1}^ra_if_i$ for some $a_1,\ldots,a_r\in\cO_X(X)$ and $h=\sum_{i=1}^{r+1}f_iy_i$, then $b_g(s)=b_h(s)$. 
Note that $h=\sum_{i=1}^rf_i(y_i+a_iy_{r+1})$. We have an automorphism of $X\times\A^{r+1}$ over $X$ which maps $y_{r+1}$ to $y_{r+1}$
and $y_i$ to $y_i+a_iy_{r+1}$ for $1\leq i\leq r$. Since this maps $g$ to $h$, it follows that $b_g(s)=b_h(s)$.
\end{rmk}

\begin{rmk}
The hypersurface $g=\sum_{i=1}^rf_iy_i$ also appeared in \cite{Javier}, where it was shown that its Milnor fibration
(at the origin) has trivial geometric monodromy and fiber homotopic to the complement of the germ defined by the ideal $(f_1,\ldots,f_r)$.
\end{rmk}

We can now deduce the first consequences of the theorem.

\begin{proof}[Proof of Corollary~\ref{cor1}]
It is shown in \cite[Theorem~2]{BMS} that the negative of the largest root of $b_{\fa}(s)$ is the log canonical threshold $\lct(\fa)$ of $\fa$.
Since $\widetilde{\alpha}_g$ is, by definition,  the negative of the largest root of $\widetilde{b}_g(s)$, the assertion follows from Theorem~\ref{thm_description}.
\end{proof}

\begin{proof}[Proof of Corollary~\ref{cor2}] 
Since $W$ is reduced and a complete intersection of pure codimension $r$, it follows from \cite[Theorem~4]{BMS} that $W$ has rational singularities if and only if $\lct(\fa)=r$
and $-r$ is a root of multiplicity 1 of ${b}_{\fa}(s)$. The assertion in the corollary thus follows from Theorem~\ref{thm_description}.
\end{proof}

\section{An application to the Strong Monodromy conjecture}

For a nice introduction to Igusa's zeta function we refer to \cite{Nicaise}.
We only recall here the definition of the $p$-adic absolute value and of the Haar measure on $\Z_p^n$. 
Let us denote by ${\rm ord}_p$ the $p$-adic valuation on $\Q_p$ (so that any element $u\in\Q_p$ can be
 written as $u=p^{{\rm ord}_p(u)}v$, with $v$ invertible in $\Z_p$). With this notation, if ${\rm ord}_p(u)=m$, then the $p$-adic absolute value of $u$
 is given by $|u|_p=\frac{1}{p^m}$. 

The Haar measure $\mu_p$ on $\Z_p^n$ is the unique translation-invariant measure such that $\mu_p(\Z_p^n)=1$. In particular, for 
every $u\in\Z_p^n$ and every positive integer $m$, we have
$$\mu_p(u+p^m\Z_p^n)=\frac{1}{p^{mn}}.$$
Note also that the Haar measure is multiplicative with respect to the Cartesian product of cylinders in $\Z_p^n\times\Z_p^r\simeq\Z_p^{n+r}$
(recall that a cylinder in $\Z_p^n$ is the inverse image of some set via a projection map $\Z_p^n\to (\Z/p^m\Z)^n$).

Given a nonzero $f\in\Z_p[x_1,\ldots,x_n]$, we denote by ${\rm ord}_f$ the function ${\rm ord}_p\circ f\colon \Z_p^n\to\Z_{\geq 0}$.
It then follows by definition that
\begin{equation}\label{eq1_Igusa}
Z_p(f;s)=\sum_{m\in\Z_{\geq 0}}\mu_p\big({\rm ord}_f^{-1}(m)\big)p^{-ms}.
\end{equation}
Similarly, if $\fa=(f_1,\ldots,f_r)$ is an ideal in $\Z_p[x_1,\ldots,x_n]$ and if we put ${\rm ord}_{\fa}=\min_{i=1}^r{\rm ord}_{f_i}$, then
\begin{equation}\label{eq2_Igusa}
Z_p(\fa;s)=\sum_{m\in\Z_{\geq 0}}\mu_p\big({\rm ord}_{\fa}^{-1}(m)\big)p^{-ms}.
\end{equation}
We can now prove the main result of this section.

\begin{proof}[Proof of Theorem~\ref{thm_Igusa}]
The key point is the computation of the $p$-adic measure of ${\rm ord}_g^{-1}(m)\subseteq\Z_p^{n+r}$ for each $m\geq 0$. 
Since $g=\sum_{i=1}^rf_iy_i$, it follows that if $(u,v_1,\ldots,v_r)\in\Z_p^{n+r}$ lies in ${\rm ord}_g^{-1}(m)$, then
$${\rm ord}_{\fa}(u)=\min_{i=1}^r{\rm ord}_{f_i}(u)\leq m.$$

Suppose now that $u\in\Z_p^n$ is such that $\min_{i=1}^r{\rm ord}_p(u_i)=d\leq m$. We want to describe the set $W_u(m)$ consisting of those
$v=(v_1,\ldots,v_r)\in\Z_p^r$ such that ${\rm ord}_p(u_1v_1+\ldots+u_rv_r)=m$. 
Suppose that $j$ is such that ${\rm ord}_p(u_j)=d$.
By assumption, we can write $u_i=t^du'_i$ for $1\leq i\leq r$ and $u'_i\in\Z_p$, with $u'_j$ invertible.
In this case, we have ${\rm ord}_p(u_1v_1+\ldots+u_rv_r)=m$ if and only if ${\rm ord}_p(u'_1v_1+\ldots+u'_rv_r)=m-d$. 
Since $u'_j$ is invertible, this means that $v_1,\ldots,\widehat{v_j},\ldots, v_r$ can be chosen arbitrarily and then the class of $v_j$ 
in $\Z/p^{m-d+1}\Z$ can take precisely $(p-1)$ values 
(and then every lift of this class satisfies the desired condition). We thus conclude that $W_u(m)\subseteq \Z_p^r$ is a cylinder whose $p$-adic measure 
is $\frac{p-1}{p^{m-d+1}}$. 

The projection $\Z_p^n\times\Z_p^r\to\Z_p^n$ onto the first component induces a map
$$\tau\colon {\rm ord}_g^{-1}(m)\to\bigsqcup_{d=0}^m {\rm ord}_{\fa}^{-1}(d).$$
If we decompose each ${\rm ord}_{\fa}^{-1}(d)$ as a disjoint union of cylinders such that on each of these cyclinders 
$\min_i{\rm ord}_{f_i}$ is achieved by some fixed $i$, then for every such cylinder $C\subseteq {\rm ord}_{\fa}^{-1}(d)$, the subset
$\tau^{-1}(C)\subseteq \Z_p^n\times\Z_p^r$ is a cylinder with
$$\mu_p\big(\tau^{-1}(C)\big)=\mu_p(C)\cdot \frac{p-1}{p^{m-d+1}}.$$
Therefore we have
$$\mu_p\big({\rm ord}_g^{-1}(m)\big)=\sum_{d=0}^m\mu_p\big({\rm ord}_{\fa}^{-1}(d)\big)\cdot\frac{p-1}{p^{m-d+1}}.$$
Using the formulas (\ref{eq1_Igusa}) and (\ref{eq2_Igusa}), we obtain
$$Z_p(g;s)=\sum_{m\geq 0}\frac{1}{p^{ms}}\cdot \sum_{d=0}^m\mu_p\big({\rm ord}_{\fa}^{-1}(d)\big)\cdot\frac{p-1}{p^{m-d+1}}$$
$$=\frac{p-1}{p}\cdot \sum_{d\geq 0}\frac{\mu_p\big({\rm ord}_{\fa}^{-1}(d)\big)}{p^{ds}}\cdot\sum_{m\geq d}\frac{1}{p^{(m-d)(s+1)}}
=\frac{1-p^{-1}}{1-p^{-(s+1)}}Z_p(\fa;s).$$
This gives the first assertion in the theorem.

The formula relating $Z_p(g;s)$ and $Z_p(\fa;s)$ shows that if we denote by
$n_p(g; \lambda)$ and $n_p(\fa; \lambda)$ the order of $\lambda$ as a pole of $Z_p(g; s)$ and $Z_p(\fa; s)$, respectively, then
$n_p(g; \lambda)=n_p(\fa; \lambda)$ for $\lambda\neq -1$; moreover, 
if $n_p(\fa;-1)\geq 1$, then $n_p(g;-1)=n_p(\fa;-1)+1$.
The second assertion in the theorem follows from this
and Theorem~\ref{thm_description}.
\end{proof}

\begin{rmk}\label{rmk_motivic}
For the sake of simplicity, we assumed in Theorem~\ref{thm_Igusa} that $\fa$ is an ideal in $\Z_p[x_1,\ldots,x_n]$. A similar formula holds, with the same proof,
if we assume that $f\in O_K[x_1,\ldots,x_n]$, where $O_K$ is the ring of integers of a $p$-adic field $K$. Moreover, the proof generalizes immediately to the case of the motivic zeta functions of Denef and Loeser \cite{DL}. In this case, we see that if $X$ is a smooth complex algebraic variety, $\fa$ is the coherent ideal
generated by $f_1,\ldots,f_r\in\cO_X(X)$, and $g=\sum_{i=1}^rf_iy_i$, then the motivic zeta functions $Z_{\rm mot}(g;s)$ and $Z_{\rm mot}(\fa; s)$ 
of $g$ and $\fa$, respectively, are related by the following formula
$$Z_{\rm mot}(g; s)=\frac{1-{\mathbf L}^{-1}}{1-{\mathbf L}^{-(s+1)}}Z_{\rm mot}(\fa; s).$$
\end{rmk}

\section*{References}
\begin{biblist}

\bib{Bernstein}{article}{
   author={Bern\v{s}te\u{i}n, I. N.},
   title={Modules over a ring of differential operators. An investigation of
   the fundamental solutions of equations with constant coefficients},
   journal={Funkcional. Anal. i Prilo\v{z}en.},
   volume={5},
   date={1971},
   number={2},
   pages={1--16},
}

\bib{Bjork}{book}{
       author={Bj{\"o}rk, J.},
       title={Analytic ${\mathcal D}$-modules and applications},  
       series={Mathematics and its Applications},  
       publisher={Kluwer Academic Publishers},
       date={1993},
}

\bib{BMS}{article}{
   author={Budur, N.},
   author={Musta\c{t}\u{a}, M.},
   author={Saito, M.},
   title={Bernstein-Sato polynomials of arbitrary varieties},
   journal={Compos. Math.},
   volume={142},
   date={2006},
   number={3},
   pages={779--797},
}

\bib{DL}{article}{
   author={Denef, J.},
   author={Loeser, F.},
   title={Motivic Igusa zeta functions},
   journal={J. Algebraic Geom.},
   volume={7},
   date={1998},
   number={3},
   pages={505--537},
}

\bib{Javier}{article}{
   author={Fern\'{a}ndez de Bobadilla, J.},
   title={On homotopy types of complements of analytic sets and Milnor
   fibres},
   conference={
      title={Topology of algebraic varieties and singularities},
   },
   book={
      series={Contemp. Math.},
      volume={538},
      publisher={Amer. Math. Soc., Providence, RI},
   },
   date={2011},
   pages={363--367},
}

\bib{Igusa1}{article}{
   author={Igusa, J.},
   title={Complex powers and asymptotic expansions. I. Functions of certain
   types},
   note={Collection of articles dedicated to Helmut Hasse on his
   seventy-fifth birthday, II},
   journal={J. Reine Angew. Math.},
   volume={268/269},
   date={1974},
   pages={110--130},
}

\bib{Igusa2}{article}{
   author={Igusa, J.},
   title={Complex powers and asymptotic expansions. II. Asymptotic
   expansions},
   journal={J. Reine Angew. Math.},
   volume={278/279},
   date={1975},
   pages={307--321},
}

\bib{Kashiwara}{article}{
   author={Kashiwara, M.},
   title={$B$-functions and holonomic systems. Rationality of roots of
   $B$-functions},
   journal={Invent. Math.},
   volume={38},
   date={1976/77},
   number={1},
   pages={33--53},
}

\bib{Kollar}{article}{
   author={Koll\'ar, J.},
   title={Singularities of pairs},
   conference={
      title={Algebraic geometry---Santa Cruz 1995},
   },
   book={
      series={Proc. Sympos. Pure Math.},
      volume={62},
      publisher={Amer. Math. Soc., Providence, RI},
   },
   date={1997},
   pages={221--287},
}

\bib{Malgrange}{article}{
  author= {Malgrange, B.},
     title= {Polynomes de {B}ernstein-{S}ato et cohomologie \'evanescente},
 booktitle= {Analysis and topology on singular spaces, {II}, {III}
              ({L}uminy, 1981)},
    series = {Ast\'erisque},
    volume = {101},
    pages = {243--267},
 publisher = {Soc. Math. France, Paris},
      date = {1983},
      }

\bib{Nicaise}{article}{
   author={Nicaise, J.},
   title={An introduction to $p$-adic and motivic zeta functions and the
   monodromy conjecture},
   conference={
      title={Algebraic and analytic aspects of zeta functions and
      $L$-functions},
   },
   book={
      series={MSJ Mem.},
      volume={21},
      publisher={Math. Soc. Japan, Tokyo},
   },
   date={2010},
   pages={141--166},
}

\bib{Veys}{article}{
   author={Veys, W.},
   author={Z\'{u}\~{n}iga-Galindo, W. A.},
   title={Zeta functions for analytic mappings, log-principalization of
   ideals, and Newton polyhedra},
   journal={Trans. Amer. Math. Soc.},
   volume={360},
   date={2008},
   number={4},
   pages={2205--2227},
}

\end{biblist}

\end{document}